\newtheorem{theorem}{Theorem}[section]
\newtheorem{proposition}[theorem]{Proposition}
\newtheorem{lemma}[theorem]{Lemma}
\newtheorem{remark}[theorem]{Remark}
\numberwithin{equation}{section}
\renewcommand{\(}{ \left ( }
\renewcommand{\)}{ \right )}
\renewcommand{\[}{ \left [ }
\renewcommand{\]}{ \right ]}
\newcommand{\rr}{ \mathbb{R}}
\newcommand{\R}{\mathbb{R}}
\newcommand{\Z}{\mathbb{Z}}
\newcommand{\cB}{{\mathcal B}}
\newcommand{\cC}{{\mathcal C}}
\newcommand{\cD}{{\mathcal D}}
\newcommand{\cN}{{\mathcal N}}
\newcommand{\eps}{\varepsilon}
\newcommand{\al}{\alpha}
\newcommand{\be}{\beta}
\newcommand{\ga}{\gamma}
\newcommand{\de}{\delta}
\newcommand{\De}{\Delta}
\newcommand{\la}{\lambda}
\newcommand{\Om}{\Omega}
\newcommand{\br}{\bar\rho}
\newcommand{\clos}{\text{\rm clos\,}}
\newcommand{\dist}{\text{\rm dist}}
\newcommand{\spann}{\text{\rm span}}
\newcommand{\id}{\text{\rm id}}
\newcommand{\pa}{\partial}
\newenvironment{altproof}[1]
{\noindent%\addvspace{0.3cm}
{\em Proof of {#1}}.}
{\nopagebreak\mbox{}\hfill $\Box$\par\addvspace{0.5cm}}
\begin{document}

\title[On the profile of  sign changing solutions]{On the profile of
 sign changing solutions of an almost critical problem in the ball}

\author{Thomas Bartsch \& Teresa D'Aprile \& Angela Pistoia}
\address{Thomas Bartsch, Mathematisches Institut, Justus-Liebig-Universit\"at
Giessen, Arndtstr. 2, 34392 Giessen, Germany.}
\email{Thomas.Bartsch@math.uni-giessen.de}
\address{Teresa D'Aprile, Dipartimento di Matematica, Universit\`a di Roma
``Tor Vergata", via della Ricerca Scientifica 1, 00133 Roma, Italy.}
\email{daprile@mat.uniroma2.it }
\address{Angela Pistoia, Dipartimento SBAI,
Universit\`a di Roma ``La Sapienza", via Antonio Scarpa 165, 00161 Roma,
Italy.}
\email{pistoia@dmmm.uniroma1.it }

\thanks{T.~B.\ has been supported by DAAD project 50766047,
Vigoni Project E65E06000080001.}

\thanks{T.~D.\ and A.~P.\ have been supported by   the Italian PRIN Research
Project 2009 \textit{Metodi varia\-zionali e topologici nello studio dei
fenomeni non lineari}.}

\begin{abstract} We study the existence and the profile of sign-changing
solutions to the slightly subcritical problem
$$
-\De u=|u|^{2^*-2-\eps}u \,\hbox{ in } \cB,
 \quad u=0 \,\hbox{ on }\partial  \cB,
$$
where $\cB$ is the unit ball in  $\rr^N$, $N\geq 3$,
$2^*=\frac{2N}{N-2}$ and $\eps>0$ is a small parameter. Using a
Lyapunov-Schmidt reduction we discover two new non-radial solutions
having 3 \textit{bubbles} with different nodal structures. An interesting
feature is that the solutions are obtained as a local minimum and a local
saddle point of a reduced function, hence they do not have a global
min-max description.
\end{abstract}
%\vspace{.2cm}

\maketitle

{\small
\noindent {\bf Mathematics Subject Classification 2010:} 35B33, 35B40,
35B44, 35J20, 35J61, 35J91\\

\noindent {\bf Keywords:} slightly subcritical problem, sign-changing
solutions, blow-up, finite-dimensional reduction, nodal regions\\

}

\section{Introduction and  main result}\label{intro}

The paper is concerned with the slightly subcritical elliptic problem
\begin{equation}\label{eq1}
-\De u=|u|^{2^*-2-\eps}u\ \hbox{in}\ \Om,
 \quad u=0\ \hbox{on}\ \partial \Om
\end{equation}
where $\Omega$ is a smooth and bounded domain in $\rr^N$, $N\geq 3$,
$\eps>0$ is a small parameter. Here $2^*$ denotes the critical exponent
in the Sobolev embeddings, i.e. $2^*=\frac{2N}{N-2}$. This problem has
received a lot of attention, in particular with respect to investigating
the lack of compactness of the critical problem where $\eps = 0$. Whereas
most papers deal with positive solutions and their blow-up behavior as
$\eps \to 0$ we deal with sign-changing solutions.

In \cite{poho} Poho\u{z}aev proved that the problem \eqref{eq1} does not
admit a nontrivial solution if $\Omega$ is star-shaped and $\eps\leq0$.
On the other hand problem \eqref{eq1} has a positive solution if $\eps\leq0$
and $\Omega$ is an annulus, see Kazdan and Warner \cite{kaz}. In \cite{baco}
Bahri and Coron found a positive solution to \eqref{eq1} with $\eps = 0$
provided that the domain $\Omega$ has a nontrivial topology. The
slightly supercritical case $\eps<0$ was studied in
\cite{delfemu1,delfemu2,delfemu3,pire} where the authors proved solvability
of \eqref{eq1} for $\eps<0$ sufficiently small in a domain with one or more
small holes and found positive solutions which blow up at 2 or more points
of the domain as $\eps$ goes to zero, i.~e.\ appropriately scaled solutions
$u_\eps$ converge, as $\eps\to 0^+$, towards a sum of delta distributions.

In the  subcritical case $\eps >0$, the Rellich-Kondrachov compact embedding
theorem ensures the exi\-stence of at least one positive solution and of
infinitely many sign changing solutions. In \cite{brepe,fluwe,han,rey1,rey3}
it was proved that, as $\eps\to 0^+$, the least energy positive solution
blows up and concentrates at a point $\xi$ which is a critical point of the
Robin's function of $\Omega$. Successively, in \cite{balirey,rey2} it was
studied the existence of positive solutions of \eqref{eq1} with $k\ge2$
blow-up points. In particular, in \cite{grotak} it was proved that in a
convex domain problem \eqref{eq1} does not admit any positive solution
blowing up at $k\ge2$ points.

Concerning sign-changing solutions we mention the papers
\cite{ba,Ba-Wang:1996,bawe} where the authors provide existence and
multiplicity of sign-changing solutions for more general problems than
\eqref{eq1}. However, these papers are not concerned with the nodal structure
of the solutions. The question whether the nodal surface
$$
\cN(u) := \clos\{x \in \Om: u(x) = 0\}
$$
of a solution $u$ of \eqref{eq1} intersects the boundary $\pa\Om$ has been
investigated in \cite{Aftalion-Pacella:2004}. The number and shape of nodal
domains is also important when one investigates competing species or phase
separation problems in systems like
\begin{equation}\label{eq:system}
\left\{
\begin{aligned}
-\De v &= |v|^{2^*-2-\epsilon}v + \be G_v(v,w),\\
-\De w &= |w|^{2^*-2-\epsilon}w + \be G_w(v,w),\\
v,w &\in H^1_0(\Om).
\end{aligned}
\right.
\end{equation}
It has been proved in \cite{conti-terracini-verzini:2002}, under appropriate
conditions on $G$ modeling the competition, that as $\be\to-\infty$ positive
solutions $(v_\be,w_\be)$ of \eqref{eq:system} converge towards $(v,w)$, such
that $v\cdot w=0$ and $u=v-w$ solves \eqref{eq1}. Thus the nodal domains of
$u$ correspond to the domains where the competing species $v,w$ live.

In \cite{bamipi} a solution with exactly one positive and one negative
blow-up point is constructed for problem \eqref{eq1} if $\eps>0$ is
sufficiently small. The location of the two blow-up points is also
characterized and depends on the geometry of the domain. Moreover, the authors
proved that when $\Omega$ is a ball, for any integer $k$ there exists a
solution with $k$ positive peaks and $k$ negative peaks which are located at
the vertices of a regular polygon. In particular, the nodal regions of these solutions always intersect the boundary (see Fig.~\ref{fig1}).

\begin{figure}
\includegraphics[width=\textwidth]{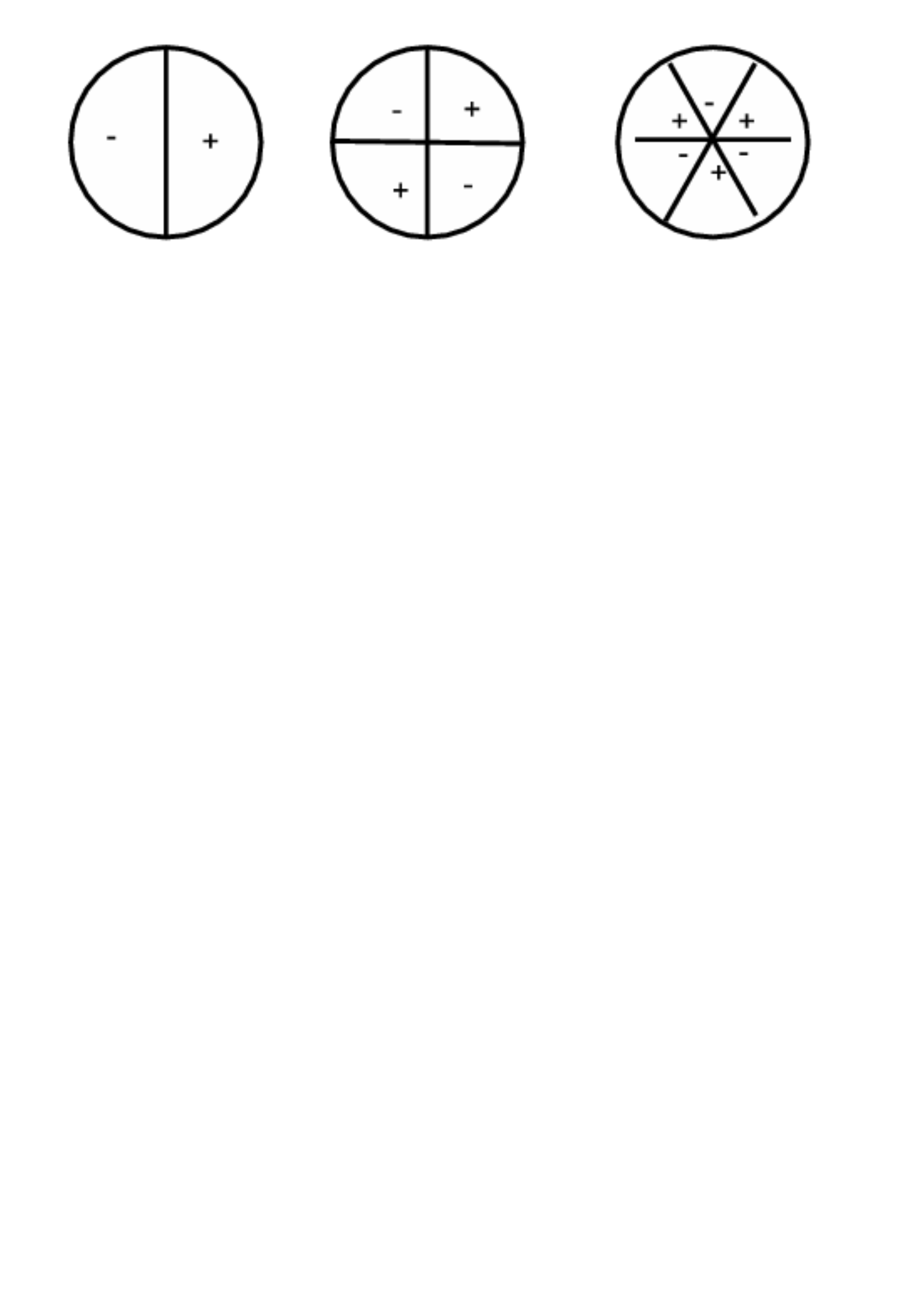}.
\vspace{-15truecm}
\caption{The nodal structure of the solutions with 2 peaks, 4 peaks, 6 peaks
found in \cite{bamipi}.}
\label{fig1}
\end{figure}

All the previous results deal with solutions with many simple blow up points.
The presence of sign-changing solutions with a multiple blow-up point is
observed in \cite{mupi1} and  \cite{piwe} for problem \eqref{eq1}. In these
papers the authors obtain solutions which have the shape of towers of
alternating-sign bubbles, i.~e.\ they are constructed as superpositions of
positive bubbles and negative bubbles blowing-up at the same point with a
different concentration rate. As a consequence, the nodal regions of these
solutions shrink to the blow up point as $\eps$ goes to zero. We also
mention the paper \cite{bep}, where the authors  study the blow-up of the
low energy sign-changing solutions of problem \eqref{eq1} and they classify
these solutions according to the concentration speeds of the positive and
negative part. In particular, they obtain some qualitative results, such as
symmetry or location of the concentration points when the domain is a ball.

Recently, the authors investigated the existence of solutions in a convex
and symmetric domain blowing up positively at $k$ points and negatively at
$l$ different points which are aligned with alternating sign along the
symmetry axis as $\eps\to0^+$. The case $k = l = 1$ has been settled in
\cite{bamipi}, the case $k = l = 2$ in \cite{badapi};
(see Fig.\ \ref{fig2}).
\begin{figure}
\includegraphics[width=\textwidth]{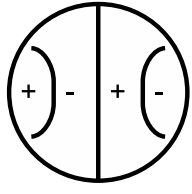}.
\vspace{-13truecm}
\caption{The nodal structure of the solution with 4 peaks from
\cite{badapi}. }
\label{fig2}
\end{figure}
Other cases, where $k = l \ge3$ or $k = l-1$, remained open. In fact, this
type of solutions seems to be very hard to find.

In the present paper we are able to prove the existence of a sign changing
solution which blows up positively at one point and negatively at two points
 when $\Omega=\cB$ is the open unit ball in $\R^N$. In order to formulate
the result we introduce the functions
\begin{equation}\label{soliton}
U_{\delta,  \xi}(x) = \alpha_N
 \bigg(\frac{\delta}{\delta^{2}+|x-\xi|^2}\bigg)^{(N-2)/2},
 \quad\alpha_N=(N(N-2))^{(N-2)/4},
\end{equation}
where $\delta>0$ and $\xi\in\rr^N$. These are actually all positive solutions
of the  limiting equation
$$
-\De U=U^{2^*-1}\hbox{ in }\rr^N
$$
and constitute the extremals for the Sobolev's critical embedding (see
\cite{au,cagispru,tal}). Let $P:\cD^{1,2}(\R^N)\to H^1_0(\cB)$ denote the
orthogonal projection with respect to the scalar product
$(u,v)=\langle \nabla u,\nabla v\rangle_{L^2}$.

\begin{theorem}\label{main}
There exist $\eps_0>0$ and for $0<\eps<\eps_0$ solutions
$\pm u_{1,\eps},\pm u_{2,\eps}\in H^1_0(\cB)$ of
\begin{equation}\label{1}
-\De u=|u|^{2^*-2-\epsilon}u\ \hbox{in}\ \cB,
\quad u=0\ \hbox{on}\ \partial \cB,
\end{equation}
with the following properties.

a) The solutions are nonradial and even with respect to $x_1,\dots,x_N$,
and their limiting behavior as $\eps \to 0$ is of the form
$$
u_{i,\eps} = PU_{\ga_{i,\eps},0} - PU_{\de_{i,\eps},\xi_{i,\eps}}
              - PU_{\de_{i,\eps},-\xi_{i,\eps}} + O(\eps)
\quad \text{in } H^1_0(\cB)
$$
where $\ga_{i,\eps},\de_{i,\eps} > 0$ and $\xi_{i,\eps} \in \cB\setminus\{0\}$.

b) $\ga_{i,\eps}/\eps$ and $\de_{i,\eps}/\eps$ are bounded away from $0$
and $\infty$, hence the solution $u_{i,\eps}$ has one positive blow-up
point at $0$ and two negative blow-up points at $\pm\xi_{i,\eps}$.

c) The blow-up points $\xi_{i,\eps}$ are bounded away from $0$ and $\pa\cB$,
and they satisfy $0<|\xi_{1,\eps}|<|\xi_{2,\eps}|<1$.

d) The exterior normal derivative of $u_{1,\eps}$ changes sign on $\pa\cB$,
whereas the exterior normal derivative of $u_{2,\eps}$ is strictly positive
on $\pa\cB$.
\end{theorem}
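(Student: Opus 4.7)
The plan is a symmetric Lyapunov--Schmidt reduction around the three-bubble ansatz
$$
u_{\La_1,\La_2,r} = PU_{\ga,0} - PU_{\de,\,r e_1} - PU_{\de,\,-r e_1} + \phi,
$$
where, in accordance with (b), I rescale $\ga = \La_1\eps$, $\de = \La_2\eps$ (the exponent being whatever makes self-interactions, Robin corrections, mutual interactions, and the $\eps$-nonlinearity balance), let $r$ range over a compact subset of $(0,1)$, and seek a small remainder $\phi \in H^1_0(\cB)$. I would work once and for all in the closed subspace $H \subset H^1_0(\cB)$ of functions even in each coordinate: this symmetry is preserved by \eqref{1}, it forces the negative bubbles to sit symmetrically at $\pm r e_1$, and it eliminates a large part of the kernel of the linearization.

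Let $L_\eps$ denote the linearization of \eqref{1} at $u_{\La_1,\La_2,r}$. Within $H$ the approximate kernel of $L_\eps$ is three-dimensional, spanned by the derivatives $\pa_{\La_1}$, $\pa_{\La_2}$, and $\pa_r$ of the ansatz. Standard uniform invertibility of $L_\eps$ on the orthogonal complement, combined with a contraction-mapping argument, produces a unique $\phi = \phi_\eps(\La_1,\La_2,r) \in H$ with $\|\phi\|_{H^1_0} = O(\eps)$ (up to log factors), solving \eqref{1} modulo the kernel and depending in $C^1$-fashion on the parameters.

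Matching the kernel components then amounts to locating critical points of the reduced functional $F_\eps(\La_1,\La_2,r) = J_\eps\bigl(u_{\La_1,\La_2,r} + \phi_\eps\bigr)$. A standard asymptotic expansion yields
$$
F_\eps(\La_1,\La_2,r) = c_0 + \eps\,\Psi(\La_1,\La_2,r) + o(\eps) \quad \text{in } C^1_{\loc},
$$
where $\Psi$ is completely explicit: logarithmic self-interaction terms in $\La_i$, Robin-type boundary corrections at $0$ and $\pm r e_1$ (in closed form because $\cB$ is a ball), and bubble-bubble interactions depending on $r$ and on $\La_2/\La_1$. The main obstacle, and what distinguishes this case from \cite{bamipi,badapi}, is the nonlinear analysis of the three-variable function $\Psi$: unlike the two-bubble setting, here opposite-sign bubbles live at different scales and one cannot reduce to a single concentration parameter. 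I would first show that $\Psi$ is coercive at the boundary of its admissible region---as $\La_i \to 0^+$ or $\infty$ and as $r \to 0^+$ or $1^-$---so that $\Psi$ attains a global minimum at some interior point $(\La_1^{(1)},\La_2^{(1)},r_1)$ yielding $u_{1,\eps}$; then, using either a mountain-pass between the global minimizer and a second basin, or a linking construction built from the shape of the Robin function of $\cB$, produce a saddle-type critical point $(\La_1^{(2)},\La_2^{(2)},r_2)$ with $r_2 > r_1$ yielding $u_{2,\eps}$. Non-degeneracy (or at least $C^0$-topological stability) of both critical points transfers them to $F_\eps$ for small $\eps$.

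Properties (a), (b), (c) then read off from the ansatz, the chosen scaling, and the ordering $r_1 < r_2$. For (d), I use the off-concentration asymptotics $PU_{\de,\eta}(x) \sim \alpha_N \de^{(N-2)/2} G(x,\eta)$ together with the Poisson-kernel formula $\pa_\nu G(\cdot,\eta)|_{\pa\cB} = -(1-|\eta|^2)/\bigl(\om_{N-1}|\cdot - \eta|^N\bigr)$ to obtain, to leading order,
$$
\pa_\nu u_{i,\eps}(x) \approx -\frac{\alpha_N}{\om_{N-1}}\left[\ga_{i,\eps}^{(N-2)/2} - \de_{i,\eps}^{(N-2)/2}(1-|\xi_{i,\eps}|^2)\Big(|x-\xi_{i,\eps}|^{-N} + |x+\xi_{i,\eps}|^{-N}\Big)\right]
$$
for $x \in \pa\cB$. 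The bracket changes sign on $\pa\cB$ precisely when $\ga_i^{(N-2)/2}$ lies between the minimum (attained for $x$ orthogonal to $\xi_i$) and the maximum (attained at $x = \pm\xi_i/|\xi_i|$) of the second term; the explicit values of $(\La_1^{(i)},\La_2^{(i)},r_i)$ supplied by the reduced analysis place $u_{1,\eps}$ in this sign-changing regime and $u_{2,\eps}$ in the regime where the second term dominates everywhere on $\pa\cB$, completing (d).
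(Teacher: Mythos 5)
Your overall framework is the same as the paper's: a Lyapunov--Schmidt reduction in the space of functions even in every coordinate, concentration parameters of order $\eps$, a $C^1$ expansion of the reduced energy whose $\eps$-order term is an explicit function of $(\la,\mu,\rho)$ built from the Green and Robin functions of the ball, and boundary asymptotics via $\pa_\nu G$ for part (d). The gap is in the one step that constitutes the actual content of the paper: the critical point analysis of the reduced function. Your plan --- prove coercivity at the boundary of the admissible region, take the global minimum, then run a mountain pass (or linking) to a second basin --- fails because the reduced function is \emph{not} coercive and has \emph{no global minimum}. In the paper's notation, after minimizing in $(\la,\mu)$ for each fixed $\rho$ (possible only for $\rho\in(\rho_0,1)$, where $\rho_0$ is the zero of $\alpha$; for smaller $\rho$ the quadratic part in $(\la,\mu)$ is indefinite), the resulting one-variable function $f(\rho)=\tfrac32 c_N-c_N\log[\la(\rho)\mu^2(\rho)]$ tends to $-\infty$ as $\rho\to\rho_0^+$ and to $+\infty$ as $\rho\to1^-$. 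The two critical points are obtained \emph{locally}: the key inequality $f'(\tfrac12)<0$ for all $N\ge3$ (proved by delicate explicit estimates on $\alpha,\beta$ at $\rho=\tfrac12$) forces a local maximum of $f$ at some $\rho_1<\tfrac12$ and a local minimum at some $\rho_2>\tfrac12$, giving a saddle of $F$ of local degree $-1$ and a local minimum of degree $+1$. The paper states explicitly (Remark~\ref{rem:main}~d) and the end of the introduction) that a global min--max description of these solutions is not available; your proposal assumes exactly such a description.

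Two further points. First, the transfer to $\eps>0$ is done via the nontrivial Brouwer degree of $\nabla F$ at each isolated critical point (isolation being free since $F$ is analytic); nondegeneracy is only verified numerically, so your phrase ``topological stability'' is acceptable only if read as nonvanishing local degree. Second, for part (d) your leading-order formula for $\pa_\nu u_{i,\eps}$ is the right object, but the conclusion does not ``read off from the explicit values'' of the critical points: these are not computable in closed form. What the paper actually uses is the \emph{location} of the critical points ($\rho_1<\tfrac12$ and $\rho_2>\br=\tfrac{\sqrt5-1}{2}$, the latter requiring the additional estimate $f'(\br)<0$), the monotonicity in $\rho$ of the minimum over $\pa\cB$ of the limiting normal derivative, and the relation $\Lambda(\rho)=-\alpha'(\rho)/(2\beta'(\rho))$ valid at critical points, all combined with further explicit inequalities for every $N\ge3$. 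As written, your treatment of (d) is a placeholder that presupposes data the reduction does not supply. (Minor: your assignment of the minimum to $r_1$ and the saddle to $r_2>r_1$ is the reverse of the paper's ordering, where the saddle occurs at the smaller radius.)
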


\begin{figure}
\includegraphics[width=\textwidth]{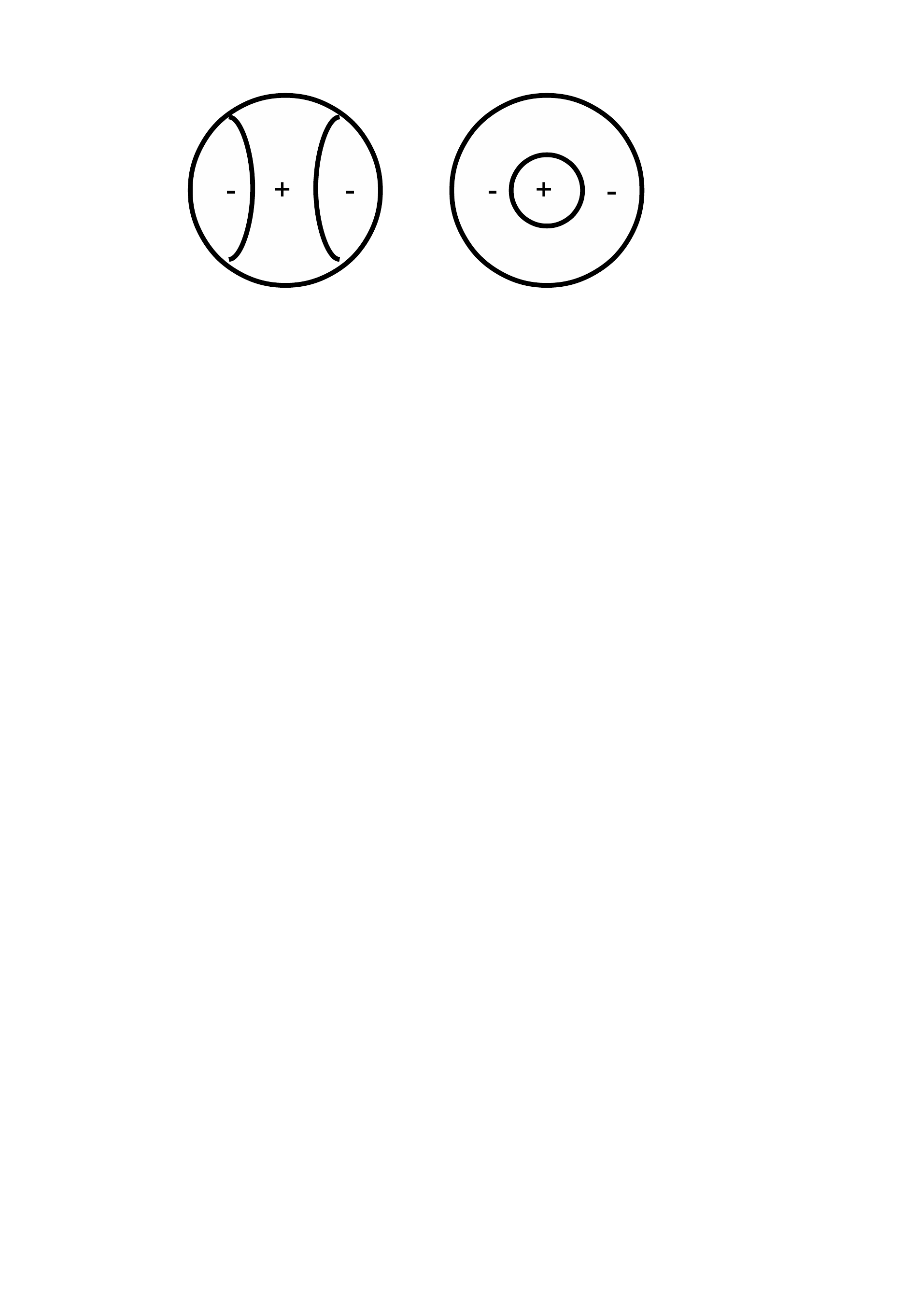}.
\vspace{-14truecm}
\caption{The nodal structure of the two solutions with 3 peaks from
Theorem~\ref{main}.}
\label{fig3}
\end{figure}

\begin{remark}\label{rem:main}
a) As a consequence of Theorem~\ref{main} d) the nodal surface
$\cN(u_{1,\eps})$ of $u_{1,\eps}$ intersects the boundary, and the nodal
surface $\cN(u_{2,\eps})$ of $u_{2,\eps}$ does not intersect the boundary
(see Fig.~\eqref{fig3}).

b) Since the problem is radially invariant, for any $A\in O(N)$ the function
$u\circ A$ solves \eqref{1} if $u$ does. Hence the solutions and the blow-up
points $\pm\xi_{i,\eps}$ are determined up to rotations.

c) The proof shows that the solutions $(\eps,u_{1,\eps}),(\eps,u_{2,\eps})$
lie on connected sets $\cC_1,\cC_2\subset(0,\eps_0)\times H^1_0(\cB)$,
respectively. We expect that $u_{1,\eps},u_{2,\eps}$ are the only solutions
of \eqref{1} having the form as described in Theorem~\ref{main}, and
consequently, that the maps
$$
u_i:(0,\eps_0)\to H^1_0(\cB),\quad \eps\mapsto u_{i,\eps},\qquad (i=1,2)
$$
are continuous; see Remarks~\ref{rem:exp1}, \ref{rem:exis}.

d) The energy of $u_{1,\eps}$ is larger than the energy of $u_{2,\eps}$.
The Morse indices $m(u_{i,\eps})$ of the solutions differ by $1$, i.~e.\
$m(u_{1,\eps}) = m(u_{2,\eps}) + 1$. It follows from
\cite[Theorem~1.2]{Aftalion-Pacella:2004} that $m(u_{2,\eps}) \ge N+1$.
From a Morse theoretic point of view the topology generated by
$u_{2,\eps}$, the solution with smaller energy, is canceled by
$u_{1,\eps}$.
\end{remark}

The proof of Theorem~\ref{main} relies on a well known Ljapunov-Schmidt
procedure. This will be recalled in Section~\ref{setting} where we reduce
the problem to a finite-dimensional one. In Section~\ref{existence} we
prove the existence of two critical points (one local minimum point and
one local saddle point) of the reduced energy. These two critical points
generate the two solutions of problem~\eqref{1}. Finally, in
Section~\ref{profile} we show that the normal
derivative of the solution generated by the minimum point changes sign on
the boundary, while the normal derivative of the solution generated by
the saddle point does not change sign on the boundary of the ball.

It seems very hard to generalize this result to more general domains,
except for small perturbations of the ball. Even the case of an an ellipsoid
seems to be very difficult. In fact, in the case of the ball the Green's
function of the Laplace operator is known and it allows to find the two
local critical points of the reduced energy essentially by direct
computations. In general, the Green's function is not explicitely known and
so it becomes very difficult to find critical points of local type, i.~e.\
local minima or local saddle points. A global min-max scheme to obtain
these solutions does not seem to be possible according to
Remark~\ref{rem:main}~d).

\section{Setting of the problem }\label{setting}

The proof of Theorem \ref{main} is based on a \textit{finite dimensional
reduction} procedure. We sketch the procedure here and refer to
\cite{bamipi} for details. To begin with we introduce scaled versions of the
functions from \eqref{soliton}:
$$
U_{\la,\rho}^\eps = U_{\la^2\eps,(\rho,0)},\quad \la>0,\ \rho\in(0,1);
$$
here $(\rho,0)\in\cB\subset\R\times\R^{n-1}$. We look for symmetric solutions
of \eqref{1} of the form
\begin{equation}\label{2}
u_\eps:=PU_{\la,0}^\eps-PU_{\mu,\rho}^\eps-PU_{\mu,-\rho}^\eps+\phi
\end{equation}
with $\la,\mu>0$, $\rho\in(0,1)$, and $\phi=O(\eps)$. Moreover, $\phi$
belongs to a suitable space defined below. It is useful to recall that
$$
PU_{\de,\xi}(x) = U_{\de,\xi}(x) - \ga_N\delta^{(N-2)/2}H(x,\xi)
   +O\left({\delta^{(N+2)/2}\big/(\dist(\xi,\pa\cB))^N}\right)
$$
where $\ga_N>0$ is a constant and the function $H$ is the regular
part of the the Green's function $G$ of the Laplace operator in the ball,
i.~e.\
$$
G(x,y)=\frac{1}{|x-y|^{N-2}}-H(x,y),\quad\
H(x,y)=\frac{1}{(|x|^2|y|^2+1-2(x,y))^{(N-2)/2}}\ .
$$
By the principle of symmetric criticality critical points of the energy
functional
$$
J_\eps: H^1_0(\cB)\to\R, \quad
J_\eps(u)
 = \frac12\int_{\cB}|\nabla u|^2 dx
    - \frac{1}{2^*-\eps}\int_{\cB}|u|^{2^*-\eps}dx,
%\quad u\in H^1_0(\cB),
$$
constrained to the subspace
$$
H_e := \{u \in H^1_0(\cB): u \text{ is even in } x_1,\dots,x_N\}
    \subset H^1_0(\cB),
$$
are solutions to problem \eqref{1}. In order to define the space for
$\phi$ we set
$$
K^\eps_{\la,\mu,\rho }
 :=\spann\left\{P\left(\frac{\pa}{\pa\la} U_{\la,0}^\eps\right),\
  P\left(\frac{\pa}{\pa\mu} U_{\mu,\rho}^\eps\right),\
  P\left(\frac{\pa}{\pa\rho} U_{\la,\rho}^\eps\) \right\}\subset H_e,
$$
and
$$
\left(K^\eps_{\la,\mu,\rho}\right)^\perp
 := \left\{\phi\in H_e:(\phi,\psi)=0\
     \hbox{for any }\psi\in K^\eps_{\la,\mu,\rho} \right\}
 \subset H_e.
$$
Here $(u,v)=\langle \nabla u,\nabla v\rangle_{L^2}$ will be used as inner
product on the Hilbert space $H_e$. We write
$\|u\|=\|\nabla u\|_{L^2}$ for the associated norm on $H_e$.

We first solve an intermediate problem for $\phi$ (see \cite{bamipi}).
Define
$$
V^\eps_{\la,\mu,\rho}:=PU_{\la,0}^\eps-PU_{\mu,\rho}^\eps-PU_{\mu,-\rho}^\eps,
$$
with $(\la,\mu,\rho) \in \cD := (0,\infty)\times(0,\infty)\times (0,1)$,
and, for any $\eta>0$ small,
$$
\cD_\eta:=(\eta,\eta^{-1})\times(\eta,\eta^{-1})\times (\eta,1-\eta)\subset D.
$$

\begin{lemma}\label{reg}
For $\eta>0$ small there exists $\eps_1>0$ and a constant $C>0$ such that for
each $\eps\in (0,\eps_1)$ and each $(\la,\mu,\rho)\in \mathcal{D}_\eta$ there
exists a unique
$\phi=\phi^\eps_{\la,\mu,\rho} \in \(K^\eps_{\la,\mu,\rho}\)^\perp$
satisfying
$$
\De(V^\eps_{\la,\mu,\rho}+\phi)
  + |V^\eps_{\la,\mu,\rho}+\phi|^{2^*-2-\eps}(V^\eps_{\la,\mu,\rho}+\phi)
 \in K^\eps_{ \la,\mu,\rho}
$$
and $ \|\phi\| < C\eps$. Moreover, the map
$D_\eta\ni(\la,\mu,\rho)\mapsto \phi^\eps_{\la,\mu,\rho}\in H_e$ is of
class $C^1$.
\end{lemma}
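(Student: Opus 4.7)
The plan is to recast the equation as a fixed-point problem for $\phi$ on a small ball in $(K^\eps_{\la,\mu,\rho})^\perp$. Let $i^*: L^{(2^*-\eps)'}(\cB) \to H_e$ be the adjoint of the Sobolev embedding restricted to the symmetric subspace, and let $\Pi^\perp$ denote the orthogonal projection of $H_e$ onto $(K^\eps_{\la,\mu,\rho})^\perp$. Writing $V := V^\eps_{\la,\mu,\rho}$, the requirement $\De(V+\phi) + |V+\phi|^{2^*-2-\eps}(V+\phi) \in K^\eps_{\la,\mu,\rho}$ is equivalent to
$$
L(\phi) = R + N(\phi),
$$
where $L(\phi) := \Pi^\perp\bigl[\phi - i^*\bigl((2^*-1-\eps)|V|^{2^*-2-\eps}\phi\bigr)\bigr]$ is the projected linearization at $V$, $R := \Pi^\perp\bigl[i^*(|V|^{2^*-2-\eps} V) - V\bigr]$ measures the error of $V$ as an approximate solution, and $N(\phi) := \Pi^\perp i^*\bigl[|V+\phi|^{2^*-2-\eps}(V+\phi) - |V|^{2^*-2-\eps}V - (2^*-1-\eps)|V|^{2^*-2-\eps}\phi\bigr]$ collects the superlinear remainder.

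Next I would carry out three standard steps. First, I would show that $L$ is invertible with $\|L^{-1}\|$ bounded uniformly in $(\la,\mu,\rho) \in \cD_\eta$ and small $\eps$; the key input is the Bianchi--Egnell nondegeneracy of the bubble, which states that the kernel of $-\De - (2^*-1)U_{\de,\xi}^{2^*-2}$ on $\cD^{1,2}(\R^N)$ is spanned by $\pa_\de U$ and the $N$ translation directions $\pa_{\xi_i} U$, and imposing evenness in $x_1,\dots,x_N$ at the centers $0$ and $\pm(\rho,0)$ leaves exactly the three directions generating $K^\eps_{\la,\mu,\rho}$. Second, a direct expansion based on the projection formula
$$
PU_{\de,\xi}(x) = U_{\de,\xi}(x) - \ga_N\delta^{(N-2)/2}H(x,\xi) + O\bigl(\delta^{(N+2)/2}/\dist(\xi,\pa\cB)^N\bigr)
$$
would yield $\|R\| = O(\eps)$, the dominant contributions arising from the $\eps$-perturbation of the exponent and from the interaction between the three bubbles, both of order $\eps$ because $\la^2\eps,\mu^2\eps = O(\eps)$ and the bubbles are uniformly separated on $\cD_\eta$. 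Third, H\"older and Sobolev inequalities would produce an estimate
$$
\|N(\phi_1) - N(\phi_2)\| \le C\bigl(\|\phi_1\|^\sigma + \|\phi_2\|^\sigma\bigr)\|\phi_1 - \phi_2\|
$$
with $\sigma = \min\{1, 2^*-2\} > 0$, so that $T(\phi) := L^{-1}(R + N(\phi))$ is a contraction on $\{\phi \in (K^\eps_{\la,\mu,\rho})^\perp : \|\phi\| \le C_0 \eps\}$ for $C_0$ sufficiently large. Banach's theorem then yields a unique fixed point $\phi^\eps_{\la,\mu,\rho}$ with $\|\phi^\eps_{\la,\mu,\rho}\| < C\eps$.

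The $C^1$-dependence on $(\la,\mu,\rho)$ would follow from the implicit function theorem applied to the $C^1$ map $F(\eps,\la,\mu,\rho,\phi) := L(\phi) - R - N(\phi)$, using that $\pa_\phi F$ is invertible at the fixed point by the uniform bound of the first step. The main technical obstacle is precisely this uniform invertibility of $L$: one must rule out kernel elements that could concentrate around any of the three well-separated bubbles as $\eps \to 0$. The standard way to handle this is a blow-up/contradiction argument: a normalized sequence $\phi_n \in (K^{\eps_n}_{\la_n,\mu_n,\rho_n})^\perp$ with $\|L_n(\phi_n)\| \to 0$ is rescaled around each of the three bubble centers; each weak limit in $\cD^{1,2}(\R^N)$ lies in the kernel of the linearized single-bubble operator, and the orthogonality conditions together with the evenness constraint force it to vanish, contradicting the normalization. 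Full details of this scheme are carried out in \cite{bamipi,badapi}, to which we refer.
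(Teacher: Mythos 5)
Your proposal is correct and follows exactly the standard finite-dimensional reduction scheme (uniform invertibility of the projected linearized operator via a blow-up/nondegeneracy argument, an $O(\eps)$ estimate of the error term, a contraction mapping on a ball of radius $C\eps$, and the implicit function theorem for the $C^1$ dependence on $(\la,\mu,\rho)$), which is precisely the argument the paper relies on: the paper gives no proof of Lemma~\ref{reg} and defers entirely to \cite{bamipi}, where this scheme is carried out in detail. Your outline, including the identification of the uniform invertibility of $L$ as the main technical point, matches that reference.
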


Now we introduce the reduced energy functional
$$
\widetilde J_\eps: \cD_\eta \to\R, \quad
\widetilde J_\eps(\la,\mu,\rho)
 := J_\eps\(V^\eps_{\la,\mu,\rho} + \phi^\eps_{\la,\mu,\rho}\),
$$
where $\phi^\eps_{\la,\mu,\rho}$ has been constructed in Lemma \ref{reg}.
The next result (see \cite{balirey}) reduces the original problem \eqref{1}
to a finite dimensional one.

\begin{proposition}\label{relation}
The point  $(\la,\mu,\rho) \in \cD_\eta$ is a critical point
of $\widetilde J_\eps$ if, and only if, the corresponding function
$u_\eps = V^\eps_{\la,\mu,\rho} + \phi^\eps_{\la,\mu,\rho}$
is a solution of \eqref{1}.
\end{proposition}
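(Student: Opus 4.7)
The approach is the classical Lyapunov--Schmidt ``reduced functional'' argument: the equation $J_\eps'(u_\eps)=0$ splits into projections onto $K^\eps_{\la,\mu,\rho}$ and onto $\bigl(K^\eps_{\la,\mu,\rho}\bigr)^\perp$; Lemma~\ref{reg} already kills the second piece through the construction of $\phi^\eps_{\la,\mu,\rho}$, and we will show that $\nabla\widetilde J_\eps(\la,\mu,\rho)=0$ is precisely what kills the first. The ``if'' direction is essentially free: should $u_\eps$ solve \eqref{1}, then $J_\eps'(u_\eps)=0$, and the $C^1$-dependence of $\phi^\eps_{\la,\mu,\rho}$ on $(\la,\mu,\rho)$ granted by Lemma~\ref{reg}, combined with the chain rule, yields $\nabla\widetilde J_\eps(\la,\mu,\rho)=0$.

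For the nontrivial ``only if'' direction, let $Z_1,Z_2,Z_3$ denote the three generators of $K^\eps_{\la,\mu,\rho}$ used in its definition. By Lemma~\ref{reg} we may write
$$
J_\eps'(u_\eps)=\sum_{j=1}^{3}c_j Z_j,
$$
and the task is to show $c_1=c_2=c_3=0$. The chain rule expresses each partial derivative of $\widetilde J_\eps$ as
$$
\partial_k\widetilde J_\eps(\la,\mu,\rho)=\sum_{j=1}^{3}c_j\bigl[(Z_j,\partial_k V^\eps_{\la,\mu,\rho})+(Z_j,\partial_k\phi^\eps_{\la,\mu,\rho})\bigr],\qquad k\in\{\la,\mu,\rho\}.
$$
Differentiating the orthogonality $(\phi^\eps_{\la,\mu,\rho},Z_j)=0$ with respect to $k$ and invoking the bound $\|\phi^\eps_{\la,\mu,\rho}\|=O(\eps)$ from Lemma~\ref{reg} gives $(Z_j,\partial_k\phi^\eps_{\la,\mu,\rho})=O(\eps)$. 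The hypothesis $\nabla\widetilde J_\eps(\la,\mu,\rho)=0$ therefore reduces to the $3\times 3$ linear system
$$
\sum_{j=1}^{3}c_j\bigl[M_{jk}+O(\eps)\bigr]=0,\qquad M_{jk}:=(Z_j,\partial_k V^\eps_{\la,\mu,\rho}).
$$

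The main obstacle, and the only step requiring real work, is the uniform invertibility of the matrix $M=(M_{jk})$ on $\cD_\eta$ as $\eps\to 0$. Since $V^\eps_{\la,\mu,\rho}$ is a signed sum of the three projected bubbles centered at $0$ and $\pm(\rho,0)$, each column $\partial_k V^\eps_{\la,\mu,\rho}$ coincides, up to sign and the symmetrization enforced by $H_e$, with one of the $Z_j$'s, so $M$ is essentially the Gram matrix of $Z_1,Z_2,Z_3$. A standard rescaling around each bubble, exploiting that distinct bubbles concentrate at well-separated points or well-separated scales, shows that the off-diagonal entries of $M$ are $o(1)$ while the diagonal entries converge to strictly positive constants expressible through integrals of the Talenti profile $U_{1,0}$ and its derivatives. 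Thus $M$ is diagonally dominant for small $\eps$, forcing $c_1=c_2=c_3=0$ and hence $J_\eps'(u_\eps)=0$. Finally, since $u_\eps\in H_e$ and $J_\eps$ is invariant under each reflection $x_i\mapsto-x_i$, the principle of symmetric criticality promotes this critical point of $J_\eps|_{H_e}$ to a critical point of $J_\eps$ on $H^1_0(\cB)$, i.e.\ to a solution of \eqref{1}.
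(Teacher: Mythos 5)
Your argument is correct and is exactly the standard Lyapunov--Schmidt bookkeeping that the paper itself does not write out but delegates to the citation of Bahri--Li--Rey: decompose $J_\eps'(u_\eps)=\sum_j c_jZ_j$, test against $\partial_k u_\eps$, use $(Z_j,\partial_k\phi)=-(\partial_kZ_j,\phi)$ to control the error, and invert the (essentially Gram) matrix $M$. The only imprecision is your blanket claim that $(Z_j,\partial_k\phi)=O(\eps)$: the three generators live on different scales (in particular $\|Z_3\|\sim\eps^{-1}$ and $\|\partial_\rho Z_3\|\sim\eps^{-2}$, since $U^\eps_{\la,\rho}=U_{\la^2\eps,(\rho,0)}$), so the entries of $M$ and of the error matrix are of different orders in $\eps$ and one must normalize the rows and columns before invoking diagonal dominance; after that normalization the perturbation is of relative size $O(\eps)$ and your conclusion $c_1=c_2=c_3=0$ goes through.
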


Now we expand the reduced energy.

\begin{proposition}\label{exp1}
We have
\begin{equation}\label{lonel}
\widetilde J_\eps (\la,\mu,\rho)
 = {c_1}_N+{c_2}_N\eps\log\eps+{c_3}_N\eps+{c_4}_N\eps F(\la,\mu,\rho)+o(\eps)
\end{equation}
${\mathcal C}^1$-uniformly on compact sets of $\cD$. Here the ${c_i}_N$'s are positive
constants which depend only on $N$. Moreover
\begin{equation}\label{4}
F(\la,\mu,\rho) :=  a\la^2 + 2\mu^2\alpha(\rho) + 4\la\mu\beta(\rho)
 - c_n\ln\la - 2c_n\ln\mu,
\end{equation}
with $c_N$ a positive constant depending only on $N$, and $a:=H(0,0)=1$,
$$
\begin{aligned}
\alpha(\rho) &:= H((\rho,0),(\rho,0)) - G((\rho,0),(-\rho,0))\\
 &= \frac{1}{(1-\rho^2)^{n-2}} - \frac{1}{(2\rho)^{N-2}}
    + \frac{1}{(1+\rho^2)^{N-2}},
\end{aligned}
$$
$$
\beta(\rho) := G((\rho,0),(0,0)) = \frac{1}{\rho^{N-2}}-1.
$$
\end{proposition}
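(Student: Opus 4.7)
The plan is to combine the standard two-step reduction argument with explicit bubble–interaction computations.

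First, I would remove $\phi^\eps_{\la,\mu,\rho}$ from the picture: by the variational characterization in Lemma~\ref{reg}, $V:=V^\eps_{\la,\mu,\rho}$ is an approximate critical point in the sense that $\|J'_\eps(V)\|=O(\eps)$, while $\|\phi\|=O(\eps)$. A Taylor expansion
$$
J_\eps(V+\phi)=J_\eps(V)+\langle J'_\eps(V),\phi\rangle+\frac12 J''_\eps(V)[\phi,\phi]+o(\|\phi\|^2)
$$
then gives $\widetilde J_\eps(\la,\mu,\rho)=J_\eps(V)+O(\eps^2)=J_\eps(V)+o(\eps)$. The $C^1$ version is obtained by differentiating with respect to $(\la,\mu,\rho)$: the derivatives of $\phi^\eps_{\la,\mu,\rho}$ are $O(\eps)$ as well (since the reduction is $C^1$), and all terms involving them are absorbed into $o(\eps)$ uniformly on the compact sets of $\cD$. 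So it suffices to expand $J_\eps(V)$.

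Next I would expand the Dirichlet and $L^{2^*-\eps}$ parts of $J_\eps(V)$ separately. For the quadratic term, writing $V=PU_{\la,0}^\eps-PU_{\mu,\rho}^\eps-PU_{\mu,-\rho}^\eps$ and using the standard formulas
$$
\|PU_{\de,\xi}\|^2=\|U\|_{L^{2^*}}^{2^*}-\ga_N^2\de^{N-2}H(\xi,\xi)+o(\de^{N-2}),
$$
$$
(PU_{\de_1,\xi_1},PU_{\de_2,\xi_2})=\ga_N^2(\de_1\de_2)^{(N-2)/2}G(\xi_1,\xi_2)+o\bigl((\de_1\de_2)^{(N-2)/2}\bigr),
$$
I obtain, upon substituting $\de=\la^2\eps$, $\mu^2\eps$, the coefficients of $a\la^2$, $2\mu^2\alpha(\rho)$ and $4\la\mu\beta(\rho)$ in $F$ (the factors $2$ and $4$ account for the two outer bubbles and their cross-interaction with the central bubble; the three pieces of $\alpha$ come respectively from the self-interaction $H((\rho,0),(\rho,0))$, the interaction with the mirror peak through $1/(2\rho)^{N-2}$, and its regular part).

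For the nonlinear integral I would split $\cB$ into neighborhoods of each blow-up point plus a remainder. On each piece only one of the three bubbles dominates, and I can use the one-bubble expansion
$$
\frac{1}{2^*-\eps}\int_\cB |PU_{\de,\xi}^\eps|^{2^*-\eps}\,dx
 =\widehat c_{1,N}+\widehat c_{2,N}\eps\log\eps+\widehat c_{3,N}\eps
   -c_N\eps\log\de+c_N'\eps\,\de^{N-2}H(\xi,\xi)+o(\eps),
$$
(with $\de^{N-2}$ absorbing $\eps^{N-2}$ into $o(\eps)$ when $N\ge 3$ after the substitution) to produce the $c_{1,N}$, $c_{2,N}\eps\log\eps$ and $c_{3,N}\eps$ summands as well as the $-c_N\ln\la-2c_N\ln\mu$ part of $F$ from the logarithmic pieces. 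The cross-integrals between different bubbles give a lower-order contribution that matches and renormalizes the $G$- and $H$-terms already produced by $\|V\|^2$, so that the final coefficient in front of $\eps F$ becomes a single constant $c_{4,N}$.

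The main obstacle is precisely this last step: carefully controlling the cross terms in $\int_\cB|V|^{2^*-\eps}dx$ and combining them with the quadratic expansion so that the interaction quantities collapse to $H(0,0)$, $H((\rho,0),(\rho,0))$, $G((\rho,0),(-\rho,0))$ and $G((\rho,0),(0,0))$ with the right signs and integer multiplicities. This is a calculation well documented in the Bahri–Li–Rey framework \cite{balirey} and was carried out in closely related settings in \cite{bamipi,badapi}; I would invoke those arguments and simply track how the two negative bubbles, placed symmetrically at $\pm(\rho,0)$, contribute the factor $2$ in front of $\mu^2\alpha(\rho)$ and the factor $4$ in front of $\la\mu\beta(\rho)$. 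The $C^1$-uniformity follows because all these expansions are uniform in $(\la,\mu,\rho)\in\cD_\eta$ and can be differentiated under the integral sign.
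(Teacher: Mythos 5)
Your outline is the standard Lyapunov--Schmidt energy expansion (first $\widetilde J_\eps = J_\eps(V)+o(\eps)$ via $\|\phi\|=O(\eps)$ and the approximate criticality of $V$, then the bubble self- and cross-interaction estimates for the Dirichlet and $L^{2^*-\eps}$ terms), which is exactly what the paper does: its proof consists of a reference to \cite{delfemu2,delfemu3} and \cite[Proposition~3.1]{bamipi}, where this computation is carried out. One caveat you should fix when writing it up: for the powers to match the stated $F$ the concentration parameter must be $\de=(\la^2\eps)^{1/(N-2)}$, so that $\de^{N-2}=\la^2\eps$ and $(\de_1\de_2)^{(N-2)/2}=\la\mu\eps$ --- substituting $\de=\la^2\eps$ literally (as the paper's definition of $U^\eps_{\la,\rho}$ suggests) only gives terms of order $\eps$ when $N=3$, and with the correct scaling the $\de^{N-2}H(\xi,\xi)$ contribution of the nonlinear integral is itself of order $\eps$ and must be retained and combined with the quadratic part rather than absorbed into $o(\eps)$.
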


\begin{proof}
The proof proceeds as in \cite{delfemu2,delfemu3}; see also
\cite[Proposition~3.1]{bamipi}.
\end{proof}

\begin{remark}\label{rem:exp1}
If $N<6$ then the original functional is of class $\cC^3$ for $\eps$
small, hence the map
$D_\eta\ni(\la,\mu,\rho)\mapsto \phi^\eps_{\la,\mu,\rho}\in H_e$
from Lemma~\ref{reg} is of class $\cC^2$. Then the reduced functional
$\widetilde J_\eps$ is of class $\cC^2$, and it can be proved as in
\cite[Proposition~2.3]{Glangetas:1993} that the expansion in
Proposition~\ref{exp1} is $\cC^2$-uniformly on compact sets of $\cD$.
\end{remark}

Our main result will follow from the following proposition applied to
$\nabla\widetilde J_\eps$.

\begin{proposition}\label{thm:cont}
Let $U\subset\R^m$ be open and bounded, and consider a one parameter
family of maps $h_\eps: \overline{U} \to \R^m$ of the form
$h_\eps(x) = \eps f(x) + g_\eps(x)$
with $f,g_\eps: \overline{U} \to \R^m$ of class $\cC^k$, $k \ge 0$. Suppose
the map $(0,\eps_1) \to \cC^k(\overline{U},\R^m)$,
$\eps \mapsto g_\eps$, is $\cC^k$ and satisfies
$\|g_\eps\|_{\cC^k} = o(\eps)$ as $\eps \to 0$.

{\rm a)} If the Brouwer degree $\deg(f,U,0) \ne 0$ is
well defined and nontrivial then there exists $\eps_0 > 0$ and a connected
subset $\cC\subset(0,\eps_0) \times U$ with the following properties:
\begin{itemize}
\item[\rm (i)] $\cC$ covers the interval $(0,\eps_0)$, i.~e.\ for every
$0 < \eps < \eps_0$ there exists $x_\eps\in U$ with $(\eps,x_\eps) \in \cC$.
\item[\rm (ii)] If $(\eps,x)\in\cC$ then $h_\eps(x) = 0$.
\item[\rm (iii)] Given a sequence $(\eps_n,x_n) \in \cC$ with $\eps_n \to 0$,
$x_n \to x_0$, then $f(x_0) = 0$.
\end{itemize}

{\rm b)} If $k = 1$ and if $x_0 \in U$ is a nondegenerate zero of $f$ then
there exists $\eps_0 > 0$ and a $\cC^1$-map $(0,\eps_0) \to U$,
$\eps \mapsto x_\eps$, such that $h_\eps(x_\eps) = 0$ for all
$\eps \in (0,\eps_0)$. Moreover, $x_\eps \to x_0$ as $\eps \to 0$.
\end{proposition}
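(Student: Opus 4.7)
The plan is to rescale $h_\eps$ by $\eps$ and reduce both parts to standard arguments. Set $\tilde h_\eps(x) := h_\eps(x)/\eps = f(x) + g_\eps(x)/\eps$. The hypothesis $\|g_\eps\|_{\cC^0} = o(\eps)$ is exactly that $\tilde h_\eps \to f$ uniformly on $\overline U$ as $\eps \to 0^+$, so the family extends to a continuous map $\tilde H: [0, \eps_1) \times \overline U \to \R^m$ with $\tilde H(0, \cdot) := f$.

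For part (a), since $\deg(f, U, 0)$ is well defined and nonzero, $f$ is nowhere zero on $\partial U$; by uniform convergence there is $\eps_0 > 0$ so that the same holds for $\tilde h_\eps$ for every $\eps \in [0, \eps_0]$, and homotopy invariance gives $\deg(\tilde h_\eps, U, 0) = \deg(f, U, 0) \ne 0$ throughout. In particular $h_\eps = \eps \tilde h_\eps$ has a zero in $U$ for every such $\eps$. To construct the continuum, consider the compact zero set $\Sigma := \tilde H^{-1}(0) \cap ([0, \eps_0] \times \overline U) \subset [0, \eps_0] \times U$, fix a zero $(0, x_0)$ of $f$ in $U$, and let $\cC''$ be the connected component of $\Sigma$ containing $(0, x_0)$. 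A Leray-Schauder/Rabinowitz-type continuation argument shows that $\cC''$ meets the opposite slice $\{\eps_0\} \times U$: otherwise $\cC''$ and $\Sigma \cap (\{\eps_0\} \times U)$ could be separated by disjoint relative open neighborhoods inside $\Sigma$, which would contradict the non-vanishing of $\deg(\tilde h_\eps, U, 0)$ at the intermediate slice. Consequently the projection of $\cC''$ onto $[0, \eps_0]$ is surjective. I then set $\cC := \cC'' \cap ((0, \eps_0) \times U)$, passing if necessary to the component of this set whose closure meets $\{0\} \times U$. Property (ii) holds by construction, and (iii) holds because any limit $(0, x_0)$ of a sequence in $\cC$ lies in $\Sigma$, so $f(x_0) = \tilde H(0, x_0) = 0$.

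The main obstacle is the last topological step giving (i): removing the fiber $\{0\} \times U$ from a continuum can in principle disconnect it badly, and one must invoke the boundary-bumping theorem (every connected component of the complement of a closed proper subset of a continuum has closure meeting that subset) to select a component of $\cC'' \setminus (\{0\} \times U)$ whose projection still covers $(0, \eps_0)$. Once this is in hand, part (b) is clean. With $k = 1$ the map $\tilde H$ is continuous, $\cC^1$ in $x$ with $D_x \tilde H$ continuous, and additionally $\cC^1$ jointly in $(\eps, x)$ for $\eps > 0$ thanks to the $\cC^1$-dependence of $g_\eps$ on $\eps$. Since $\tilde H(0, x_0) = 0$ and $D_x \tilde H(0, x_0) = Df(x_0)$ is invertible by nondegeneracy, the parameterized implicit function theorem produces a unique continuous curve $\eps \mapsto x_\eps$ near $(0, x_0)$ satisfying $h_\eps(x_\eps) = \eps \tilde H(\eps, x_\eps) = 0$, and the joint $\cC^1$ regularity on $(0, \eps_1) \times U$ upgrades it to class $\cC^1$ on $(0, \eps_0)$, with $x_\eps \to x_0$ as $\eps \to 0$.
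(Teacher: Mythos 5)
Your overall strategy coincides with the paper's: rescale to $\tilde h_\eps=\eps^{-1}h_\eps=f+\eps^{-1}g_\eps$, use $\|g_\eps\|_{\cC^k}=o(\eps)$ to get uniform convergence to $f$, conclude $\deg(\tilde h_\eps,U,0)=\deg(f,U,0)\ne0$ for small $\eps$, and then invoke a continuation argument for the continuum in a) and an implicit-function/contraction argument for b). The paper compresses the continuum construction into ``standard arguments''; you attempt to spell it out, and that is where a genuine error enters. You fix one zero $(0,x_0)$ of $f$, take the connected component $\cC''$ of $\Sigma=\tilde H^{-1}(0)$ through it, and claim that the separation lemma plus the nonvanishing of $\deg(\tilde h_\eps,U,0)$ forces $\cC''$ to reach the slice $\{\eps_0\}\times U$. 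This is false: the hypothesis controls only the \emph{total} degree, not the local degree of $f$ near the particular zero you chose, and a component emanating from a zero of local degree $0$ may terminate immediately. Concretely, take $m=1$, $U=(-2,2)$, $f(x)=x(x^2-1)^2$ (so $\deg(f,U,0)=1$) and $g_\eps(x)=\eps^2$; then $\tilde h_\eps=f+\eps$ has no zeros near $x=1$ for $\eps>0$, so the component of $\Sigma$ through $(0,1)$ is the single point $\{(0,1)\}$. Separating this component from the rest of $\Sigma$ contradicts nothing, because the degree of $f$ on a small neighborhood of $x=1$ is $0$. The repair is standard but must be stated correctly: apply Whyburn's separation lemma to the two full slices $\Sigma\cap(\{0\}\times\overline U)$ and $\Sigma\cap(\{\eps_0\}\times\overline U)$. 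If no component of $\Sigma$ joined them, $\Sigma$ would split into disjoint compacta $K_0\supset\Sigma\cap(\{0\}\times\overline U)$ and $K_1\supset\Sigma\cap(\{\eps_0\}\times\overline U)$; choosing an open $O\supset K_0$ with $\overline O\cap K_1=\emptyset$ and $\pa O\cap\Sigma=\emptyset$, the generalized homotopy invariance gives $\deg(f,O_0,0)=\deg(f,U,0)\ne 0$ at $\eps=0$ (since $O_0$ contains \emph{all} zeros of $f$) but $\deg(\tilde h_{\eps_0},O_{\eps_0},0)=0$, a contradiction. This yields \emph{some} connecting component, which is all the proposition asserts; your subsequent boundary-bumping step to remove the $\eps=0$ fiber and your verification of (ii), (iii) are fine once this is corrected.

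Part b) is correct and essentially equivalent to the paper's: you use a parameterized implicit function theorem at $(0,x_0)$ (continuity in the parameter plus invertibility of $D_xf(x_0)$ giving a continuous branch, upgraded to $\cC^1$ for $\eps>0$), whereas the paper applies the contraction mapping principle to $\id-Df(x_0)^{-1}\circ(\eps^{-1}h_\eps)$ and then the implicit function theorem for regularity; the two are interchangeable here, and you correctly avoid claiming joint $\cC^1$ regularity of $\tilde H$ at $\eps=0$, which the hypotheses do not provide.
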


\begin{proof}
a) The maps $\eps^{-1} h_\eps = f + \eps^{-1} g_\eps$ converge
uniformly towards $f$ as $\eps \to 0$, hence
$\deg(\eps^{-1} h_\eps,U,0) = \deg(f,U,0)\ne0$ is well defined and
nontrivial for $\eps > 0$ small. The existence and the properties of
$\cC$ follow by standard arguments.

b) Existence and uniqueness of $x_\eps$ follow from the contraction mapping
principle applied to
$$
f_\eps := \id - \eps^{-1}Df(x_0)^{-1}\circ h_\eps
 = \id - Df(x_0)^{-1}\circ f - \eps^{-1}Df(x_0)^{-1}\circ g_\eps.
$$
For $\de,\eps > 0$ small $f_\eps: B_\de(x_0) \to B_\de(x_0)$ is well defined
and a contraction. Differentiability is a consequence of the implicit function
theorem.
\end{proof}

Finally, the propositions \ref{exp1}, \ref{thm:cont}, and
\cite[Lemma 4.5]{bamipi} imply the following existence theorem. Observe that
$F$ is analytic, hence a critical point $(\la,\mu,\rho)$ of $F$ is
automatically isolated, and its degree,
$\deg(F,(\la,\mu,\rho)) := \deg(\nabla F,U_\de(\la,\mu,\rho),0) \in \Z$
for $\de > 0$ small, is well defined.

\begin{theorem}\label{th-ex}
Let $(\tilde\la,\tilde\mu,\tilde\rho)$ be a critical point of $F$ with
nontrivial degree.

a) There exist $\tilde\eps > 0$ and a connected subset
$\widetilde\cC\subset(0,\tilde\eps) \times \cD$ with the following properties:
\begin{itemize}
\item[(i)] $\widetilde\cC$ covers the interval $(0,\tilde\eps)$.
\item[(ii)] If $(\eps,\la,\mu,\rho) \in \widetilde\cC$ then
$\nabla\widetilde J_\eps(\la,\mu,\rho) = 0$.
\item[(iii)] Given a sequence $(\eps_n,\la_n,\mu_n,\rho_n)\in\widetilde\cC$
with $\eps_n \to 0$, then
$(\la_n,\mu_n,\rho_n) \to (\tilde\la,\tilde\mu,\tilde\rho)$.
\item[(iv)] Setting
$$
\cC := \{(\eps,u): u=V^\eps_{\la,\mu,\rho} + \phi^\eps_{\la,\mu,\rho},\
          					(\eps,\la,\mu,\rho)\in\widetilde\cC\}
		\subset (0,\tilde\eps)\times H^1_0(\cB)
$$
then any family $(\eps,u_\eps)\in\cC$ converges in
$C^1_{loc}\(\overline \Omega\setminus\{(0,0),(\tilde\rho,0),(-\tilde\rho,0)\}\)$
\begin{equation}\label{profi}
\frac{1}{\sqrt\eps} u_\eps(x) \to
  \alpha_N\big(\tilde\la G(x,0) - \tilde\mu G\(x,(\tilde\rho,0)\)
  -\tilde\mu G\(x,(-\tilde\rho,0) \right)
\quad \hbox{as } \eps\to 0.
\end{equation}
\end{itemize}

b) If $(\tilde\la,\tilde\mu,\tilde\rho)$ is a nondegenerate critical point
of $F$ then the set $\widetilde\cC$ from a) is the graph of a $\cC^1$-map
$(0,\tilde\eps) \to \cD$. Correspondingly, the set $\cC$ is the graph of a
$\cC^1$-map $(0,\tilde\eps) \to H^1_0(\cB)$.
\end{theorem}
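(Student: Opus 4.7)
The proof reduces to checking the hypotheses of Proposition~\ref{thm:cont} for the vector field $\nabla\widetilde J_\eps$ near the critical point $(\tilde\la,\tilde\mu,\tilde\rho)$. The plan is as follows. First, from the expansion in Proposition~\ref{exp1}, which is $\cC^1$-uniform on compact subsets of $\cD$, the constants and the $\eps\log\eps$ contribution drop out upon differentiation, leaving
$$
\nabla\widetilde J_\eps(\la,\mu,\rho)=\eps\,{c_4}_N\nabla F(\la,\mu,\rho)+g_\eps(\la,\mu,\rho),\qquad \|g_\eps\|_{\cC^0(\overline U)}=o(\eps),
$$
on any relatively compact open $U\subset\cD$. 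I would take $U$ to be a small open ball around $(\tilde\la,\tilde\mu,\tilde\rho)$, small enough that it is the only zero of $\nabla F$ in $\overline U$ (this uses the analyticity of $F$, which is invoked in the excerpt) so that $\deg(\nabla F,U,0)=\deg(F,(\tilde\la,\tilde\mu,\tilde\rho))\neq 0$.

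Part (a) then follows by applying Proposition~\ref{thm:cont}(a) with $k=0$, $f={c_4}_N\nabla F$, and $g_\eps$ as above. The nontriviality of $\deg(f,U,0)$ is precisely the standing hypothesis, so we obtain $\tilde\eps>0$ and a connected set $\widetilde\cC\subset(0,\tilde\eps)\times U$ covering $(0,\tilde\eps)$ along which $\nabla\widetilde J_\eps=0$; this yields (i), while (ii) is a translation via Proposition~\ref{relation}, and (iii) follows from Proposition~\ref{thm:cont}(a)(iii) together with the uniqueness of the zero of $\nabla F$ in $U$.

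For (iv), I would combine the bound $\|\phi^\eps_{\la,\mu,\rho}\|\le C\eps$ of Lemma~\ref{reg} with the standard asymptotics
$$
PU^\eps_{\la,\rho}(x)=\alpha_N\la\sqrt\eps\,G(x,(\rho,0))+o(\sqrt\eps)\quad\text{in }\cC^1_\loc(\overline{\cB}\setminus\{(\rho,0)\}),
$$
which is the content of \cite[Lemma~4.5]{bamipi}. Summing the three projected bubbles in $V^\eps_{\la,\mu,\rho}$, dividing by $\sqrt\eps$, and using (iii) to pass to the limit along $\widetilde\cC$ produces the profile \eqref{profi}. Part (b) is obtained by the same template but invoking Proposition~\ref{thm:cont}(b) instead: the $\cC^2$-variant of the expansion promised in Remark~\ref{rem:exp1} upgrades the remainder estimate to $\|g_\eps\|_{\cC^1}=o(\eps)$, while nondegeneracy makes $D(\nabla F)(\tilde\la,\tilde\mu,\tilde\rho)$ invertible, so the contraction mapping argument of Proposition~\ref{thm:cont}(b) delivers the $\cC^1$-parametrization $\eps\mapsto(\la_\eps,\mu_\eps,\rho_\eps)$. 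Composing with the $\cC^1$-map $(\la,\mu,\rho)\mapsto V^\eps_{\la,\mu,\rho}+\phi^\eps_{\la,\mu,\rho}$ from Lemma~\ref{reg} identifies $\cC$ with the graph of a $\cC^1$-map into $H^1_0(\cB)$.

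The main obstacle I anticipate is not the abstract degree-theoretic step itself but verifying the remainder estimate for $\nabla\widetilde J_\eps$ in the correct topology, namely $\|g_\eps\|_{\cC^k}=o(\eps)$ uniformly on compacta rather than just pointwise. This is why the statement of Proposition~\ref{exp1} stresses $\cC^1$-uniform convergence on compact sets, and why the nondegenerate case (b) must rely on the $\cC^2$ variant pointed out in Remark~\ref{rem:exp1}. Once these uniform error bounds are in hand, the translation from the abstract output of Proposition~\ref{thm:cont} to the concrete profile statement (iv) is a routine assembly of the uniform smallness of $\phi^\eps$ and the known asymptotics of the projected bubbles away from their concentration points.
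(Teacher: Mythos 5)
Your proposal is correct and follows exactly the route the paper intends: the paper offers no written proof beyond the remark that the theorem follows from Propositions~\ref{exp1} and \ref{thm:cont} together with \cite[Lemma 4.5]{bamipi}, and your argument assembles precisely these three ingredients (degree of $\nabla F$ well defined by analyticity, Proposition~\ref{thm:cont}(a) with $k=0$ for part (a), the bubble asymptotics plus $\|\phi^\eps\|=O(\eps)$ for the profile, and the $\cC^2$-variant of the expansion plus Proposition~\ref{thm:cont}(b) for part (b)). The only caveat worth recording is that the $\cC^2$-uniform expansion of Remark~\ref{rem:exp1}, on which part (b) rests, is only asserted for $N<6$, which is also the only range in which the paper subsequently uses the nondegenerate case.
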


%We say that the solution $u_\eps$ is generated by the critical point
%$(\la^*,\mu^*,\rho^*).$

%
\section{The existence of two solutions}\label{existence}

In this section we will prove Theorem~\ref{main} a), b), c).

\begin{lemma}\label{exis}
The function $F$ from \eqref{4} has two isolated critical points
$(\la_1,\mu_1,\rho_1)$ and $(\la_2,\mu_2,\rho_2)$. The first one is a local
saddle point with Morse index 1, hence it has degree
$\deg(F,(\la_1,\mu_1,\rho_1)) = -1$. The second is a strict local minimum,
hence $\deg(F,(\la_1,\mu_1,\rho_1)) = 1$. Moreover,
$ \rho_1 < \frac12 < \rho_2$.
\end{lemma}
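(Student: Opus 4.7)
The plan is to reduce the three-variable problem to a one-variable one by integrating out $(\la,\mu)$, and then count critical points of the reduced function. This works because $F$ is a quadratic form in $(\la,\mu)$ with a logarithmic perturbation, so the $(\la,\mu)$-subsystem is explicitly solvable.

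\textbf{Inner reduction.} On the region where $\alpha(\rho)>0$, for each fixed $\rho\in(0,1)$ the equations $\partial_\la F=0$ and $\partial_\mu F=0$ can be rewritten as
\[
\la^2+2\la\mu\beta(\rho)=c_N/2,\qquad \mu^2\alpha(\rho)+\la\mu\beta(\rho)=c_N/2.
\]
Their difference gives $\la^2+\la\mu\beta(\rho)-\mu^2\alpha(\rho)=0$, whose unique positive root for $r:=\la/\mu$ is $r(\rho)=\tfrac{1}{2}\bigl(-\beta(\rho)+\sqrt{\beta(\rho)^2+4\alpha(\rho)}\bigr)$. Substituting back determines $\mu(\rho)>0$ and $\la(\rho):=r(\rho)\mu(\rho)>0$ uniquely. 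Setting $\Phi(\rho):=F(\la(\rho),\mu(\rho),\rho)$, the envelope identity gives
\[
\Phi'(\rho)=2\mu(\rho)^2\alpha'(\rho)+4\la(\rho)\mu(\rho)\beta'(\rho)=2\mu(\rho)^2\bigl(\alpha'(\rho)+2r(\rho)\beta'(\rho)\bigr),
\]
so critical points of $F$ in this region correspond to zeros of the scalar function $\widetilde G(\rho):=\alpha'(\rho)+2r(\rho)\beta'(\rho)$. Since $\beta'<0$ on $(0,1)$ and $r>0$, a zero forces $\alpha'(\rho)>0$.

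\textbf{Counting and locating the zeros.} I would identify an interval $I\subset(0,1)$, bounded away from $0$ and $1$ and containing $1/2$, on which $\alpha>0$ and $\alpha'>0$, so that $r$ is positive and smooth. At the endpoints of $I$, either $\alpha'(\rho)\to 0^+$ (making the first summand of $\widetilde G$ vanish while the second does not) or $r(\rho)\to+\infty$ as $\rho\to 1^-$ (making the second summand $2r\beta'\to-\infty$): the two endpoint signs of $\widetilde G$ come out opposite. An explicit evaluation of the sign of $\widetilde G(1/2)$—using $\beta(1/2)=2^{N-2}-1$ and elementary bounds on the three power-function pieces of $\alpha(1/2)$ and $\alpha'(1/2)$—combined with a monotonicity/convexity argument for the two summands of $\widetilde G$, forces exactly two simple zeros $\rho_1<1/2<\rho_2$ of $\widetilde G$ in $I$.

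\textbf{Hessian and Morse indices.} At each critical point $(\la(\rho),\mu(\rho),\rho)$ the $(\la,\mu)$-block of $D^2F$ is
\[
H_{\la\mu}=\begin{pmatrix} 2+c_N/\la^2 & 4\beta \\ 4\beta & 4\alpha+2c_N/\mu^2 \end{pmatrix}.
\]
Substituting the critical-point identities $c_N/\la^2=2+4\mu\beta/\la$ and $2c_N/\mu^2=4\alpha+4\la\beta/\mu$, and using $\alpha=r^2+r\beta$, a short algebraic simplification gives $\det H_{\la\mu}=16(2r+\beta)(r+2\beta)>0$ and $\operatorname{tr} H_{\la\mu}>0$, so $H_{\la\mu}$ is positive definite. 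By the Schur-complement identity,
\[
\det D^2F\bigl(\la(\rho_i),\mu(\rho_i),\rho_i\bigr)=\Phi''(\rho_i)\,\det H_{\la\mu},
\]
so the third eigenvalue of $D^2F$ carries the sign of $\Phi''(\rho_i)$. Hence $\rho_1$ (a local maximum of $\Phi$, $\Phi''(\rho_1)<0$) yields a Morse-index-$1$ saddle with degree $-1$, and $\rho_2$ (a strict local minimum, $\Phi''(\rho_2)>0$) yields a strict local minimum of $F$ with degree $+1$. Isolatedness is automatic from analyticity of $F$.

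\textbf{Main obstacle.} The delicate step is the middle one: counting the zeros of $\widetilde G$ on $I$ and locating them on opposite sides of $1/2$. The endpoint signs of $\widetilde G$ follow qualitatively, but showing that there are exactly two zeros and that they straddle $1/2$ requires explicit estimates on the cumbersome rational combinations of $(1-\rho^2)^{-(N-2)}$, $(2\rho)^{-(N-2)}$ and $(1+\rho^2)^{-(N-2)}$ entering $\alpha,\alpha',\beta,\beta'$, carried out uniformly in the dimension $N\ge 3$. This is where the bulk of the concrete computation lies.
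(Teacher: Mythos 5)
Your inner reduction, envelope identity, and Schur-complement computation of the Morse indices coincide with the paper's argument (your $r$ is the paper's $\Lambda$, and your factorization $\det H_{\la\mu}=16(2r+\beta)(r+2\beta)$ is the paper's $16\bigl(2\alpha+\Lambda\beta+2\alpha\beta/\Lambda\bigr)$ rewritten via $\alpha=r^2+r\beta$). The gap is in the middle step, and it is twofold. First, your claim that ``the two endpoint signs of $\widetilde G$ come out opposite'' is false. The natural interval is $(\rho_0,1)$ where $\rho_0$ is the unique zero of $\alpha$ (note it is $\alpha$, not $\alpha'$, that vanishes there; $\alpha'>0$ on all of $(0,1)$). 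At $\rho\to\rho_0^+$ one has $r\to 0$ because $\beta>0$, so $\widetilde G\to\alpha'(\rho_0)>0$. At $\rho\to 1^-$ one has $\alpha'\sim C(1-\rho)^{-(N-1)}$ while $2r|\beta'|\sim C(1-\rho)^{-(N-2)/2}$, so the \emph{first} summand dominates and $\widetilde G\to+\infty$. Both endpoint signs are positive — which is exactly what a local max at $\rho_1$ followed by a local min at $\rho_2$ requires ($\widetilde G$ must go $+,-,+$). With your (incorrect) opposite-sign picture you could not produce the max/min pair the lemma asserts. The paper works instead with $f(\rho)=F(\la(\rho),\mu(\rho),\rho)=\tfrac32 c_N-c_N\log(\la\mu^2)$ and shows $f\to-\infty$ at $\rho_0^+$, $f\to+\infty$ at $1^-$; existence of an interior max on $(\rho_0,\tfrac12]$ and an interior min on $[\tfrac12,1)$ then follows from the single inequality $f'(\tfrac12)<0$.

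Second, that inequality $\chi(\tfrac12)=\alpha'(\tfrac12)+2\Lambda(\tfrac12)\beta'(\tfrac12)<0$ for every $N\ge3$ is the real content of the proof, and you defer it entirely (``an explicit evaluation \dots combined with a monotonicity/convexity argument''). The paper's proof of it is not a routine check: it needs $\alpha(\tfrac12)<\beta^2(\tfrac12)$ for $N\ge4$, the elementary bound $\sqrt{1+x}-1\ge\tfrac25 x$ on $(0,1)$ to control $\Lambda(\tfrac12)$ from below by $\tfrac45\alpha(\tfrac12)/\beta(\tfrac12)$, and then a reduction to the explicit inequality $-\tfrac{28}{15}(\tfrac43)^{N-2}-4(\tfrac45)^{N-2}+\tfrac{26}{5}<0$, checked separately for $N=4,5$ and for $N\ge6$, with $N=3$ done by hand. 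Finally, you overclaim: ``exactly two simple zeros'' is neither needed for the lemma nor established — the paper only conjectures that $F$ has precisely two critical points (Remark~\ref{rem:exis}) and verifies nondegeneracy numerically for $N<6$; isolatedness and the strict max/min dichotomy come from analyticity of the one-variable function $f$, which is all the degree computation requires.
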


Postponing the proof of this lemma we first deduce the\\

\begin{altproof}{Theorem~\ref{main} a), b), c)}
Let $u_{i,\eps}$ be the solution of \eqref{1} corresponding to the critical
point $(\la_i,\mu_i,\rho_i)$ of $F$ from Lemma~\ref{exis}. Then the blow-up
property of Theorem~\ref{main}~a) is satisfied by construction (see
Theorem~\ref{th-ex}) with
$$
\xi_{i,\eps} \to (\rho_i,0),\
\frac{\ga_{i,\eps}}{\eps} \to \sqrt{\la_i},\
\frac{\de_{i,\eps}}{\eps} \to \sqrt{\mu_i},
$$
as $\eps \to 0$. Properties b) and c) of Theorem~\ref{main} follow
immediately.
\end{altproof}

Theorem~\ref{main}~d) will be proved in the next section.

\begin{remark}\label{rem:exis}
We conjecture that $F$ has precisely two critical points. Then we obtain
continuous curves
$$
(0,\eps_0)\to H_e \subset H^1_0(\cB), \quad\eps\mapsto u_{i,\eps},
\qquad (i=1,2)
$$
of sign changing solutions $u_{i,\eps}$ to problem \eqref{1} as stated in
Remark~\ref{rem:main}. If $N<6$ we checked numerically that the two critical
points from Lemma~\ref{exis} are nondegenerate, hence Theorem~\ref{th-ex}
implies that the two curves are of class $\cC^1$.
\end{remark}
	
\begin{altproof}{Lemma~\ref{exis}}
First of all, it is useful to point out that, since $\alpha'>0$, 
$\al(\rho)\to -\infty$ as $\rho\to 0$, $\alpha(\frac{1}{2})>0$, there exists 
$\rho_0\in(0,\frac12)$ such that
$$\alpha(\rho_0)=0\ \hbox{and}\ \alpha(\rho)>0\ \hbox{for any}\ \rho \in(\rho_0,1).$$
We have
$$
\partial_\la F(\la,\mu,\rho) = 2\la + 4\mu\beta(\rho) - \frac{c_N}{\la},\quad
\partial_\mu F(\la,\mu,\rho)
 = 4\mu\alpha(\rho) + 4\la\beta(\rho) - \frac{2c_N}{\mu}.
$$
Then for any $\rho\in(\rho_0,1)$ there exist  unique $\la(\rho)$ and $\mu(\rho)$
such that
$$
\nabla_{\la,\mu}F(\la(\rho),\mu(\rho),\rho) = 0.
$$
More precisely,
\begin{equation}\label{7}
\mu(\rho) = \sqrt{\frac{c_N}{2\alpha(\rho)+2\Lambda(\rho)\beta(\rho)}},\qquad
\la(\rho)=\Lambda(\rho)\mu(\rho)
\end{equation}
where
\begin{equation}\label{6}
\Lambda(\rho) := \frac{\sqrt{\beta^2(\rho)+4 \alpha(\rho)}-\beta (\rho)}{2} > 0.
\end{equation}
We remark that the Hesse matrix
$ D^2_{\la,\mu}F(\la(\rho),\mu(\rho),\rho)$ is positively definite and in
particular non-degenerate. In fact, since
$\nabla_{\la,\mu}F(\la(\rho),\mu(\rho),\rho) = 0$, an easy computation shows
that
\begin{align*}
D^2 _{\la,\mu}F(\la(\rho),\mu(\rho),\rho)
 &= \(\begin{aligned}
     & 2 + \frac{c_N}{\(\la(\rho)\)^2} & 4\beta(\rho)\\
     & 4\beta(\rho) & 4\alpha(\rho)+ \frac{2c_N}{\(\mu(\rho)\)^2} \\
		\end{aligned}\)\\
 &= 16 \(\begin{aligned}
           & 1+\frac{\beta(\rho)}{\Lambda(\rho)} & \beta(\rho)\\
		       & \beta(\rho) & 2 \alpha(\rho)+ \Lambda(\rho)\beta(\rho) \\
		     \end{aligned}\).
\end{align*}
Then
$\mathrm{tr}D^2 _{\la,\mu}F(\la(\rho),\mu(\rho),\rho)>0 $
and
$$
\mathrm{det}D^2_{\la,\mu}F(\la(\rho),\mu(\rho),\rho)
 = 16\(2\alpha(\rho) + \Lambda(\rho)\beta(\rho)
    + \frac{2\alpha(\rho)\beta(\rho)}{\Lambda(\rho)}\) > 0.
$$

Now, let us consider the reduced function
\begin{equation}\label{def:f}
f(\rho) := F(\la(\rho),\mu(\rho),\rho)
 = \frac32 c_N-c_N\log\[\la(\rho)\mu^2(\rho)\],\;\; \rho\in (\rho_0,1).
\end{equation}
Since $f:(\rho_0,1) \to \R$ is analytic, critical points are either strict
local maxima or minima. If $\rho_1$ is a local maximum of $f$ then
$(\la(\rho_1),\mu(\rho_1),\rho_1)$ is a critical point of $F$ with Morse
index $1$ and degree $-1$. If $\rho_2$ is a local minimum of $f$ then
$(\la(\rho_2),\mu(\rho_2),\rho_2)$ is a local minimum of $F$ with
degree $+1$.

Observe that
\begin{equation}\label{limiti}
\lim\limits_{\rho\to\rho_0^+}f(\rho) = -\infty \quad\hbox{and}\quad
\lim\limits_{\rho\to1^-}f(\rho) = +\infty.
\end{equation}
In fact, since $\alpha(\rho_0) = 0$ we get as $\rho\to\rho_0^+$
$$
\Lambda(\rho) \sim \alpha(\rho),\ \ \mu(\rho) \sim \(\alpha(\rho)\)^{-1/2},\ \
\la(\rho) \sim \(\alpha(\rho)\)^{1/2} $$
which imply
$$
\la(\rho)\mu^2(\rho)\to+\infty \quad\hbox{and}\quad
f(\rho) \to -\infty \quad\text{as } \rho \to \rho_0^+.
$$
Moreover, as $\rho\to1^-$ we have
$$
\alpha(\rho) \sim \frac{1}{(1-\rho)^{N-2}},\ \ \beta(\rho) \sim -\rho,\ \
\Lambda(\rho) \sim \sqrt{\alpha(\rho)} \sim \frac{1}{(1-\rho)^{\frac{N-2}{2}}},
$$
which imply
$$
\mu(\rho) \sim (1-\rho)^{\frac{N-2}{2}},\ \ \la(\rho) \sim 1
\quad\hbox{and}\quad f(\rho) \to +\infty \quad\text{as } \rho \to 1^-.
$$

We claim that
\begin{equation}\label{one}
f'\Big(\frac12\Big) < 0 \quad\text{ for all }N\geq 3.
\end{equation}
Then it follows that $f$ has a local maximum point $\rho_1 < \frac12$ and a
local minimum point $\rho_2 > \frac12$.

An easy computation shows that
\begin{align*}
f'(\rho)
 &= \pa_\la F(\la(\rho),\mu(\rho),\rho)\la'(\rho)
    + \pa_\mu F(\la(\rho),\mu(\rho),\rho)\mu'(\rho)
		+ \pa_\rho F(\la(\rho),\mu(\rho),\rho)\\
 &= \pa_\rho F(\la(\rho),\mu(\rho),\rho)\\
 &= 2\(\mu(\rho)\)^2\alpha'(\rho) + 4\mu(\rho)\la(\rho)\beta'(\rho)
  = 2\(\mu(\rho)\)^2\big(\alpha'(\rho) + 2\Lambda(\rho)\beta'(\rho)\big).
\end{align*}
Thus setting
\begin{equation}\label{8}
\chi(\rho) := \frac{f'(\rho)}{2\(\mu(\rho)\)^2}
 =\alpha'(\rho) + 2\Lambda(\rho)\beta'(\rho)
\end{equation}
we need to show that $\chi(\frac12) < 0$.

For $N=3$ this can be checked explicitely.
%we have
%$$
%\chi\Big(\frac12\Big)
% = \frac{16}{9} + 2 - \frac{16}{25}
%   - 4\Big(\sqrt{1 + \frac43 + \frac{16}{5}} - 1\Big)
% \leq \frac{16}{9} + 2 - \frac{16}{25} - 4 < 0.
%$$
Next observe that
\begin{equation}\label{first}
\alpha\Big(\frac12\Big) < \beta^2\Big(\frac12\Big)
 \quad \hbox{ if } N \geq 4
\end{equation}
because
$$
\begin{aligned}
\alpha\Big(\frac12\Big)
 &= \Big(\frac43\Big)^{N-2} - 1 + \Big(\frac45\Big)^{N-2}
   \leq \Big(\frac43\Big)^{N-2} \leq \frac14 2^{N-1}\\
 &\leq \frac14\left(2^{N-3}(2^{N-1}-4) + 1\right)
   =\frac14\beta^2\Big(\frac12\Big) \quad \hbox{ if } N\geq 4.
\end{aligned}
$$
Then, by \eqref{first}, using the inequality
\begin{equation}\label{ineq}
\sqrt{1+x}-1\geq \frac25x\quad \hbox{for any}\ x\in(0,1),
\end{equation}
for $N \geq 4$ we get
\begin{equation}\label{sob}
\begin{aligned}
\chi\Big(\frac12\Big)
 &= \alpha'\Big(\frac12\Big) + \beta\Big(\frac12\Big)
  		\bigg(\sqrt{1+\frac{4\alpha(\frac12)}{\beta^2(\frac12)}}-1\bigg)
			\beta'\Big(\frac12\Big)\\
 &\leq \alpha'\Big(\frac12\Big)
  + \frac85\frac{\alpha(\frac12)}{\beta(\frac12)}\beta'\Big(\frac12\Big).
\end{aligned}
\end{equation}
We compute
$$
\frac{1}{N-2}\alpha'\Big(\frac12\Big)
 = \Big(\frac43\Big)^{N-1}+2-\Big(\frac45\Big)^{N-1}
$$
and
$$
\frac{1}{N-2}\beta'\Big(\frac12\Big) = -2^{N-1},
$$
and
$$
\frac{\alpha (\frac12)}{\beta (\frac12)}
 = \frac{(\frac43)^{N-2} - 1 + (\frac45)^{N-2}}{2^{N-2}-1}
 \geq \frac{(\frac43)^{N-2} - 1 + (\frac45)^{N-2}}{2^{N-2}}.
$$
Therefore, by \eqref{sob}, for $N \geq 4$ we obtain
$$
\begin{aligned}
\frac{1}{N-2}\chi\Big(\frac12\Big)
 &\leq \frac{1}{N-2}\alpha'\Big(\frac12\Big)
   + \frac{1}{N-2}\frac85\frac{\alpha(\frac12)}{\beta(\frac12)}\beta'(\frac12)\\
 &\leq \Big(\frac43\Big)^{N-1} + 2 - \Big(\frac45\Big)^{N-1}
   - \frac85\frac{(\frac43)^{N-2} - 1 + (\frac45)^{N-2}}{2^{N-2}}2^{n-1}\\
 &= -\frac{28}{15}\Big(\frac43\Big)^{N-2} - 4\Big(\frac45\Big)^{N-2} + \frac{26}{5}.
\end{aligned}
$$
We observe that $\frac{28}{15}(\frac43)^{N-2} > \frac{26}{5}$ for $N\geq 6$,
while, by a direct computation,
$-\frac{28}{15}(\frac43)^{N-2}-4(\frac45)^{N-2}+\frac{26}{5} < 0$ for $N=4,5$.
This allows us to conclude that $\chi(\frac12) < 0$ if $N\geq 4$, hence
\eqref{one} holds and Lemma~\ref{exis} follows.
\end{altproof}

For the profile of the second solution we also need the estimate
\begin{equation}\label{two}
f'(\br) < 0 \quad \text{for } \br := \frac{\sqrt{5}-1}{2},
\text{ for all } N\geq 3.
\end{equation}
It is sufficient to prove the inequality $\chi(\br) < 0$, which can be checked
for $N = 3,4,5$ by a direct computation. For the case $N \ge 6$ we first
observe that
\begin{equation}\label{an0}
\sqrt{1+t}-1\ge \frac{t}{3}\quad\forall\ t\in[0,3],
\end{equation}
and
\begin{equation}\label{an2}
1+\br^2\geq2\br\ge\br\sqrt{1+\br^2}\ge1-\br^2=\br.
\end{equation}
In order to use \eqref{an0} we need to check that
\begin{equation}\label{an3}
\frac{4\alpha(\br)}{\beta^2(\br)} \le 3 \quad\hbox{if } N\ge6.
\end{equation}
In fact, \eqref{an2} implies
\begin{align*}
&4\alpha(\br)-3\beta^2(\br)\\
&\hspace{1cm}
 = 4\(\frac{1}{(1 - \br^2)^{N-2}} - \frac{1}{(2\br)^{N-2}}
      +\frac{1}{(1 + \br^2)^{N-2}}\)
	 - 3\(\frac{1}{\br^{2N-4}} + 1 - 2\frac{1}{\br^{N-2}}\)\\
&\hspace{1cm}
 \le \frac{10}{\br^{N-2}} - \frac{3}{\br^{2N-4}}
\end{align*}
and so \eqref{an3} follows because $\br < \(\frac{3}{10}\)^{N-2}$ for
$N \ge 6$.

Therefore, by the definition of $\Lambda$ in \eqref{6}, using \eqref{an0}
and \eqref{an3} we deduce
\begin{equation}\label{an1}
\Lambda(\br) \ge \frac{2\alpha(\br)}{3\beta(\br)}
 > \frac23\alpha(\br)\br^{N-2} \quad\hbox{for } N \ge 6.
\end{equation}
Now, \eqref{an2} and \eqref{an1} combined with the definition of $\chi$ in
\eqref{8} imply for $N \geq 6$:
\begin{align*}
&\chi(\br) \le \alpha'(\br)- \frac43(N-2)\frac{\alpha(\br)}{\br}
\\ &\hspace{1cm}
 = \frac{2(N-2)}{3\br}\(\frac{5\br^2 - 2}{(1 - \br^2)^{N-1}}
   +\frac{7\br}{(2\br)^{N-1}} - \frac{5\br^2 + 2}{(1 + \br^2)^{N-1}}\)
\\ &\hspace{1cm}
 = \frac{2(N-2)}{3\br^n}\(5\br^2 - 2 + \frac{7\br}{2 ^{N-1}}
    - (5\br^2 + 2)\(\frac{\br}{1+\br^2}\)^{N-1}\) < 0.
		\end{align*}
The last inequality follows for $N = 6$ by an explicit computation. For
$N \ge 7$ observe that the term $5\br^2-2 + \frac{7\br}{2 ^{n-1}}$
decreases as $N$ increases, and $5\br^2-2 + \frac{7\br}{2 ^{n-1}} < 0$
for $N = 7$.

This finishes the proof of $\chi(\br) < 0$ for $N \geq 6$, hence \eqref{two}
holds. As a consequence we obtain $\rho_2 > \br$.

\section{The profile of the solutions}\label{profile}

In this section we prove Theorem~\ref{main}~d). Let ${u_1}_\eps$ and
${u_2}_\eps$ be the solutions generated by the critical points
$(\la(\rho_1),\mu(\rho_1),\rho_1)$ and $(\la(\rho_2),\mu(\rho_2),\rho_2)$
of $F$, respectively, as stated in Theorem \ref{th-ex}. Here $\la(\rho)$ and
$\mu(\rho)$ are from \eqref{7}, and $\rho_1,\rho_2$ are the two critical points
of the reduced function $f$ from \eqref{def:f}. Recall that
$0 < \rho_0 < \rho_1 < \frac12$ and $\br = \frac{\sqrt{5}-1}{2} < \rho_2 < 1$.

As a consequence of \eqref{profi} we deduce that in a neighborhood
$\mathfrak{U}$ of $\pa\cB$ not containing the blow-up points there holds
$$
\frac{{u _i }_{\epsilon }(x)}{\alpha_N{\mu_i}\sqrt\eps} \to \varphi(\rho_i,x)
  := \Lambda(\rho _i) G(x,0)- \big(G(x,(\rho_i,0)) + G(x,(-\rho_i ,0))\big)\
 \hbox{ in }  C^1(\mathfrak{U})
$$
as $\eps$ goes to zero. Here $\Lambda $ has been defined in \eqref{7} and
\eqref{6}.

It follows that the exterior normal derivative satisfies
$$
\pa_\nu\varphi(\rho_i,x) = (N-2)\psi(\rho_i,x_1),\quad\ x_1\in[-1,1],
$$
where
\begin{equation}\label{10}
\psi(\rho,x_1)
 := -\Lambda(\rho) + (1-\rho^2)\(\frac{1}{(\rho^2+1-2\rho x_1)^{N/2}}
        + \frac{1}{(\rho^2+1+2\rho x_1)^{N/2}}\).
\end{equation}
Defining
$$
M(\rho) := \max_{|x_1|\le1}\psi(\rho,x_1) = \psi(\rho,1)
   = -\Lambda(\rho) + (1-\rho^2)\(\frac{1}{(1-\rho)^{N}}+\frac{1}{(1+\rho)^{N}}\),
$$
and
$$
m(\rho):=\min_{|x_1|\le1}\psi(\rho,x_1) = \psi(\rho,0)
   = -\Lambda(\rho) + 2(1-\rho^2)\frac{1}{(\rho^2+1)^{N/2}}
$$
we immediately obtain:
$$
m(\rho_i )>0\quad \Longrightarrow\quad
 \pa_\nu u_{i,\eps} \text{ does not change sign in } \pa\Om,
$$
and
$$
m(\rho_i )<0<M(\rho_i )\quad \Longrightarrow\quad
 \pa_\nu u_{i,\eps} \text{ does change sign in } \pa\Om.
$$
Thus Theorem~\ref{main}~d) follows if we can show that
\begin{equation}\label{13}
m(\rho_1) > 0,
\end{equation}
and
\begin{equation}\label{13bis}
m(\rho_2) < 0 < M(\rho_2).
\end{equation}
For the proof of these inequalities we first observe that
\begin{equation}\label{14}
\Lambda' = \frac{\beta'\(\beta-\sqrt{\beta^2+4\alpha}\)
             + 2\alpha'}{2\sqrt{\beta^2+4\alpha}} > 0
\quad \text{for all } \rho \in (\rho_0,1),
\end{equation}
since $\beta' < 0$, $\alpha' > 0$ and $\alpha > 0$ in $(\rho_0,1)$.
Moreover, using \eqref{14}, a simple calculation shows that
\begin{align}\label{pa3}
& m'(\rho)<0\quad \text{for all } \rho \in (\rho_0,1).
\end{align}

\begin{altproof}{\eqref{13}}
By \eqref{pa3} it suffices to prove that
\begin{equation}\label{mum}
m\Big(\frac12\Big)>0.
\end{equation}
This can be checked for $N=3$ by explicit computation. For $N\ge4$ we
argue as follows. Using the inequality $\sqrt{1+x}-1 < \frac{x}{2}$ for
$x>0$ we obtain
$$
m\Big(\frac12\Big)
 > -\frac{\alpha\(\frac12\)}{\beta(\frac12)} + \frac32\Big(\frac45\Big)^{N/2}
 = -\frac{\(\frac43\)^{N-2} - 1 + \(\frac45\)^{N-2}}{2^{N-2}-1}
   + \frac32\Big(\frac45\Big)^{N/2}
$$
Therefore we only need to show that
$$
\frac38\cdot\(\frac{4}{\sqrt{5}}\)^N >
 \frac32\(\frac{2}{\sqrt{5}}\)^N + \(\frac43\)^{N-2} -1 + \(\frac45\)^{N-2}.
$$
This is easily checked for $N=4$, and then it holds for all $N \ge 4$.
\end{altproof}

\begin{altproof}{\eqref{13bis}}
First of all, we remark that
$M(\rho) > 0$ for any $\rho$ such that $\chi(\rho) = 0$
where $\chi$ has been defined in \eqref{8}. In fact, $\chi(\rho)=0$ implies
$\Lambda(\rho)=-\frac{\alpha'(\rho)}{2\beta'(\rho)}$ and so
\begin{equation}\label{emme}
M(\rho)
 = \frac{\rho^N}{(1+\rho^2)^{N-1}}
   + \frac{2^{N-1}\((1+\rho)^N+(1-\rho)^N-\rho^N\)-(1-\rho^2)^{N-1}}
	        {2^{N-1}(1-\rho^2)^{N-1}}
	>0,
\end{equation}
because a direct calculation shows that
$$
2^{N-1}\((1+\rho)^N + (1-\rho)^N - \rho^N\) - (1-\rho^2)^{N-1}>0
\quad\hbox{for any}\ \rho\in[0,1].
$$

As a consequence we obtain $M(\rho_i) > 0$ because $\chi(\rho_i)=0$. Now
it remains to show that $m(\rho_2) < 0$. Since $m$ is decreasing in
$\rho$ and $\rho_2 > \br$ it suffices to prove $m(\br) < 0$. Using
\eqref{an2}  and \eqref{an1} we obtain
\begin{align*}
&m(\br)
 < -\frac23\(\frac{\br}{1-\br^2}\)^{N-2} + \frac23\frac{1}{2^{N-2}}
   - \frac23\(\frac{\br}{1+\br^2}\)^{N-2}
	 + 2\frac{1-\br^2}{\(\sqrt{1+\br^2}\)^N}\\
&\le \frac23\(\frac{\br}{1-\br^2}\)^{N}
     \[-\frac{(1-\br^2)^2}{\br^2} + 4\(\frac{1-\br^2}{2\br }\)^{N}
		   +3(1-\br^2)\(\frac{1-\br^2}{\br\sqrt{1+\br^2}}\)^N\].
\end{align*}
The last expression is negative for $N=6$, hence for all $N \ge 6$. For
$N=3,4,5$ we show $m(\br) < 0$ by an explicit computation.
\end{altproof}

   \end{document}